\newtheorem{theorem}{Theorem}[section]
\newtheorem{lemma}[theorem]{Lemma}
\theoremstyle{definition}
\newtheorem{proposition}[theorem]{Proposition}
\newtheorem{corollary}[theorem]{Corollary}
\theoremstyle{remark}
\numberwithin{equation}{section}
\def\R{\mathbb{R}}
\def\calC{\mathcal{C}}
\def\blambda{\bar{\lambda}}
\def\tlambda{\tilde{\lambda}}
\begin{document}

\title[A MEMS model with fringing filed]{Radial regular and rupture  solutions for a MEMS model with fringing field}

\author{Marius Ghergu}
\thanks{}
\address{School of Mathematics and Statistics, University College Dublin, Belfield, Dublin 4, Ireland}
\address{Institute of Mathematics Simion Stoilow of the Romanian Academy, 21 Calea Grivitei St., 010702 Bucharest, Romania}
\email{marius.ghergu@ucd.ie}

\author{Yasuhito Miyamoto}
\thanks{The second author was supported by JSPS KAKENHI Grant Numbers 19H01797 and 19H05599.}
\address{Graduate School of Mathematical Sciences, The University of Tokyo, 3-8-1 Komaba, Meguro-ku, Tokyo 153-8914, Japan}
\email{miyamoto@ms.u-tokyo.ac.jp}

%    General info

\subjclass[2020]{Primary 34A12, 35B32;  Secondary 35B40, 35J62}

\keywords{MEMS equation, Fringing field, Regular solution, Rupture solution, Bifurcation}

\begin{abstract}
We investigate radial solutions for the problem 
\[
\begin{cases}
\displaystyle -\Delta U=\frac{\lambda+\delta|\nabla U|^2}{1-U},\; U>0 & \textrm{in}\ B,\\
U=0 & \textrm{on}\ \partial B,
\end{cases}
\]
which is related to the study of Micro-Electromechanical Systems (MEMS). Here, $B\subset \R^N$ $(N\geq 2)$ denotes the open unit ball and $\lambda, \delta>0$ are real numbers. Two classes of solutions are considered in this work: (i) {\it regular solutions}, which satisfy $0<U<1$ in $B$ and (ii) {\it rupture solutions} which satisfy $U(0)=1$, and thus make the equation singular at the origin. 
Bifurcation with respect to parameter $\lambda>0$ is also discussed.

\end{abstract}

\maketitle 

\section{Introduction and the main results}
In this paper we are concerned with the problem
\begin{equation}\label{E2}
\begin{cases}
\displaystyle -\Delta U=\frac{\lambda+\delta|\nabla U|^2}{1-U},\; U>0 & \textrm{in}\ B,\\
U=0 & \textrm{on}\ \partial B,
\end{cases}
\end{equation}
where $B\subset \R^N$ ($N\geq 2$) denotes the open unit ball and $\lambda, \delta>0$ are real numbers.
The study of \eqref{E2} is motivated by the more general problem
\begin{equation}\label{E2v1}
\begin{cases}
\displaystyle -\Delta U=\frac{\lambda+\delta|\nabla U|^2}{(1-U)^p},\; U>0 & \textrm{in}\ B,\\
U=0 & \textrm{on}\ \partial B,
\end{cases}
\end{equation}
where $p\geq 1$. The case $p=2$ in \eqref{E2v1} was discussed in \cite{WY10}. As the authors in \cite{WY10} emphasized, their aproach is suitable to treat \eqref{E2v1} for all $p>1$.
The problem \cite{WY10} with $p=2$  arises in the mathematical modelling of  the Micro-Electromechanical Systems (MEMS). In such a context, $\lambda$ represents the applied voltage while $\delta |\nabla u|^2$ is related to the effect of a fringing electrostatic field (see, e.g., \cite{LW08}).

In this paper we are interested in the study of two classes of radial solutions to \eqref{E2} namely
\begin{itemize}
\item {\it regular solutions}, which satisfy $0<U<1$ in $B$.
\item {\it rupture solutions} which satisfy $U(0)=1$, and thus make the main equation in \eqref{E2} singular at the origin. 
\end{itemize}
The results for radial solutions to \eqref{E2v1}  obtained in \cite{WY10} show that:
\begin{itemize}
\item There exists $\Lambda>0$ such that:
\begin{enumerate}
\item[(i)] problem \eqref{E2v1} has no regular solutions for $\lambda>\Lambda$.
\item[(ii)]  problem \eqref{E2v1} has a unique regular solution if $\lambda=\Lambda$.
\item[(iii)] problem \eqref{E2v1} has at least two regular solutions for $0<\lambda<\Lambda$.
\end{enumerate}
\item Problem \eqref{E2v1} has no rupture solutions for all $\lambda>0$.
\end{itemize}
In this paper, our results for \eqref{E2} reveal a striking difference to those for \eqref{E2v1}. More precisely, we show that:
\begin{itemize}
\item Problem \eqref{E2} has infinitely many regular solutions for suitable $\lambda$ and $\delta$ (see Theorem~\ref{thmain1} (ii) below).
\item If $0<\delta <N/2$ then problem \eqref{E2} has exactly one rupture solution (see Theorem~\ref{thmain1} below).
\item If $\delta \geq N/2$ then, for $\lambda>0$ small, problem \eqref{E2} has infinitely many rupture solutions (see Theorem~\ref{thmain3} below).
\end{itemize} 
Finally, let us mention that non-radial rupture solutions for the problem
\begin{equation}\label{E2v2}
\begin{cases}
\displaystyle -\Delta U= \frac{\lambda(1+|\nabla U|^2)}{(1-U)^2},\; U>0 & \textrm{in}\ \Omega,\\
U=0 & \textrm{on}\ \partial \Omega,
\end{cases}
\end{equation}
where $\Omega\subset \R^2$ is a smooth domain, are discussed in \cite{DW12}. It is obtained in \cite{DW12} that  \eqref{E2v2} admits a solution which develops an isolated rupture as $\lambda\to 0$. Furthermore, if $\Omega$ is not simply connected, then, for any $m\geq 1$ problem \eqref{E2v2} admits a solution with $m$ isolated ruptures as $\lambda\to 0$.

Radial regular solutions of \eqref{E2} satisfy 
\begin{equation}\label{E4'}
\begin{cases}
U''+\frac{N-1}{r}U'+\frac{\lambda+\delta(U')^2}{1-U}=0 & \textrm{for}\ 0<r<1,\\
U(0)=\alpha\in (0,1),\\
U'(0)=0,\; U(1)=0.
\end{cases}
\end{equation}
In our approach we will first try to use a change of unknown in order to reduce the gradient term $|\nabla U|^2$ in \eqref{E2}. Thus, taking $U=1-\phi(u)$  in \eqref{E2} we see that $u$ fulfills  
\[
\begin{cases}
\displaystyle \Delta u=\frac{\lambda}{\phi(u)\phi'(u)},\; u>0 & \textrm{in}\ B,\\
u=a\geq 0 & \textrm{on}\ \partial B,
\end{cases}
\]
provided $\phi$ satisfies
\begin{equation}\label{phii}
\phi'(u)=\phi(u)^\delta\;\,\mbox{ and }\;\, \phi(a)=1.
\end{equation}
Solving \eqref{phii} we are led to three distinct shapes of $\phi$ according to the cases $0<\delta<1$, $\delta=1$ and $\delta>1$. We shall see that problem \eqref{E2} (and also \eqref{E4'}) is equivalent to (\ref{Ev1}), (\ref{Ev3}) or (\ref{Ev2}) which features a more convenient form to study. In particular, by the classical result in \cite{GNN79}, any regular solution $U$ of \eqref{E2} must be radial and thus, it satisfies \eqref{E4'} for some $\alpha>0$. Furthermore, for fixed $\delta>0$, the bifurcation diagram is given by the graph 
$\calC:=\{(\alpha, \lambda(\alpha)):\ 0<\alpha<1\}$,
and $\calC$ determines the solution structure. We easily see that $\lim_{\alpha\to 0}\lambda(\alpha)=0$.
In order to describe the shape of the graph $\lambda(\alpha)$ let us introduce the following definition.

\medskip

\noindent{\bf Definition. } {\it
Let $\calC$ be the bifurcation curve of (\ref{E4'}).
\begin{enumerate}
\item[(i)] We call $\calC$  of {\it Type I} if  there exists $\lambda^*>0$ such that $\lambda(\alpha)\to \lambda^*$ $(\alpha\to 1)$ and $\lambda(\alpha)$ is strictly increasing.
\item[(ii)] We call $\calC$ of {\it Type II} if there exists $\lambda^*>0$ such that $\lambda(\alpha)\to \lambda^*$ $(\alpha\to 1)$ and $\lambda(\alpha)$ oscillates around $\lambda^*$.

\end{enumerate}
}
We will see that for $0<\delta<N/2$ the above $\lambda^*$ is given by $\lambda^*=N-1-\delta$. Our main result concerning regular solutions of \eqref{E2} is stated below.

\begin{theorem}\label{thmain1} Assume $N\geq 2$ and $0<\delta<N/2$. The following hold true.
\begin{enumerate}
\item[(i)] If $N\ge 3$ and $\delta\le (N-2-2\sqrt{N-1})/2$, then the bifurcation diagram of \eqref{E2} is of Type I.
Furthermore, \eqref{E2} has a unique rupture solution $(\lambda^*,U^*)$ given by
\begin{equation}\label{rupture1}
(\lambda^*,U^*)=(N-1-\delta, 1-r)
\end{equation}
and \eqref{E2} has exactly one radial regular solution for each fixed $\lambda\in(0,\lambda^*)$. 

\item[(ii)] If $N\ge 3$ and $\delta>(N-2-2\sqrt{N-1})/2$ or if $N=2$, then the bifurcation diagram of \eqref{E2} is of Type II.
Furthermore, \eqref{E2} has a unique rupture solution $(\lambda^*,U^*)$ given by \eqref{rupture1} and \eqref{E2} with $\lambda=\lambda^*$ has infinitely many radial regular solutions.
In particular, this occurs for $3\le N\le 6$ (since $(N-2-2\sqrt{N-1})/2<0$).
\end{enumerate}
\end{theorem}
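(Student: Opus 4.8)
\medskip
\noindent\textbf{Proof proposal.}
The plan is first to eliminate the gradient nonlinearity through the substitution $U=1-\phi(u)$ with $\phi'=\phi^{\delta}$ and $\phi(a)=1$ as in \eqref{phii}, which turns \eqref{E2} into $\Delta u=\lambda/(\phi(u)\phi'(u))=\lambda\,\phi(u)^{-(1+\delta)}$. Setting $\psi=\phi(u)^{1-\delta}=(1-U)^{1-\delta}$ in the case $0<\delta<1$ (and using the exponential, respectively positive--power, analogue when $\delta=1$, respectively $\delta>1$) reduces the radial problem \eqref{E4'} to the canonical form
\[
\psi''+\frac{N-1}{r}\psi'=\tlambda\,\psi^{-q},\qquad \psi'(0)=0,\quad \psi(1)=1,
\]
with $q=(1+\delta)/(1-\delta)$ and $\tlambda=(1-\delta)\lambda$. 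Here a regular solution corresponds to $0<\psi<1$ with $\psi(0)=(1-\alpha)^{1-\delta}$, while a rupture solution corresponds to $U(0)=1$. A direct substitution shows that the self--similar profile $\psi^{*}(r)=r^{1-\delta}$ solves this equation precisely when $\lambda=N-1-\delta$, and translating back produces the explicit rupture solution \eqref{rupture1} with $\lambda^{*}=N-1-\delta$.

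The computational heart is the linearization at $\psi^{*}$. Since the right--hand side $G(\psi)=\tlambda\psi^{-q}$ satisfies $-G'(\psi^{*})=q\tlambda(\psi^{*})^{-q-1}=(1+\delta)(N-1-\delta)\,r^{-2}$ at $\lambda=\lambda^{*}$, the linearized equation is the Euler equation
\[
v''+\frac{N-1}{r}v'+\frac{(1+\delta)(N-1-\delta)}{r^{2}}\,v=0 .
\]
Seeking $v=r^{\mu}$ yields the indicial equation
\[
\mu^{2}+(N-2)\mu+(1+\delta)(N-1-\delta)=0,
\]
with discriminant $D=(N-2)^{2}-4(1+\delta)(N-1-\delta)$. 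Solving $D=0$ for $\delta$ gives exactly $\delta=\bigl((N-2)\pm2\sqrt{N-1}\bigr)/2$, so the smaller root is the threshold $(N-2-2\sqrt{N-1})/2$ in the statement. Within $0<\delta<N/2$ and $N\ge3$ the larger root exceeds $N/2$, whence $D\ge0$ (real indicial roots) holds iff $\delta\le(N-2-2\sqrt{N-1})/2$, and $D<0$ (a complex conjugate pair) otherwise; when $N=2$ or $3\le N\le6$ the threshold is negative and only the complex case occurs, which accounts for the final sentence of~(ii).

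It remains to convert this spectral dichotomy into the shape of $\calC$. I would pass to the autonomous Emden--Fowler system given by $t=\log r$, $\psi(r)=r^{1-\delta}z(t)$, under which $\psi^{*}$ becomes an equilibrium whose linearization has eigenvalues governed by the roots $\mu$ above, and I would exploit the scaling invariance $\psi\mapsto k^{-(1-\delta)}\psi(k\,\cdot)$ of the canonical equation to generate the whole family of regular solutions from a single trajectory and to read off $\lambda(\alpha)$ from the radius at which this trajectory first meets the level $\psi=1$. If the indicial roots are real the equilibrium is a (possibly degenerate) node approached monotonically, so $\lambda(\alpha)$ increases strictly to $\lambda^{*}$ as $\alpha\to1$; since $\lim_{\alpha\to0}\lambda(\alpha)=0$, the map $\alpha\mapsto\lambda(\alpha)$ is then a bijection of $(0,1)$ onto $(0,\lambda^{*})$, giving Type~I and exactly one regular solution for each $\lambda\in(0,\lambda^{*})$. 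If the roots are complex the equilibrium is a focus, $\psi-\psi^{*}$ behaves like $r^{(2-N)/2}$ times a nonconstant periodic function of $\log r$, and $\lambda(\alpha)-\lambda^{*}$ changes sign infinitely often as $\alpha\to1$, so the line $\lambda=\lambda^{*}$ is crossed infinitely many times, producing infinitely many regular solutions and Type~II. In both cases the self--similar profile is the only radial solution with $U(0)=1$ meeting $U=0$ at $r=1$, which gives uniqueness of the rupture solution; the hypothesis $\delta<N/2$ (in particular $\lambda^{*}=N-1-\delta>0$) is what confines us to this single profile, in contrast with the regime $\delta\ge N/2$.

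The main obstacle is precisely this passage from the local linear picture to the global monotonicity or oscillation of $\calC$. Establishing $\lambda(\alpha)\to\lambda^{*}$ as $\alpha\to1$ requires a compactness argument showing that the regular solutions converge to $\psi^{*}$, and upgrading ``complex eigenvalues'' to genuinely infinitely many sign changes of $\lambda(\alpha)-\lambda^{*}$ requires controlling the nonlinear trajectory near the focus---most cleanly through a zero--number (intersection--number) argument that counts crossings of $\psi$ with $\psi^{*}$, is monotone in $\alpha$, and jumps by one at each oscillation. A secondary difficulty is that the reduction yields three distinct canonical equations according to $\delta<1$, $\delta=1$, $\delta>1$, all of which occur inside $0<\delta<N/2$ when $N\ge3$, so the autonomous--system analysis must be carried out uniformly across the exponential and positive--power cases as well.
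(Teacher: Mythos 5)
Your opening moves coincide with the paper's: the substitution $U=1-\phi(u)$ from \eqref{phii} is exactly how the paper removes the gradient term, and your reduced equation is (up to sign conventions for $\delta>1$) the paper's \eqref{Ev1}, \eqref{Ev3}, \eqref{Ev2}. Your linearization at $\psi^*=r^{1-\delta}$ is also correct: the indicial equation $\mu^2+(N-2)\mu+(1+\delta)(N-1-\delta)=0$ has discriminant vanishing at $\delta=(N-2\pm2\sqrt{N-1})/2$, the larger root lies above $N/2$, and the threshold in the statement is recovered; this is precisely the Joseph--Lundgren computation that underlies the exponents $p_c$ and $p_{\rm JL}$ appearing in the paper's proof. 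So the skeleton is right and matches the paper's strategy.

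The gap is everything after the indicial computation. You state yourself that the ``main obstacle is precisely this passage from the local linear picture to the global monotonicity or oscillation of $\calC$,'' and that is not a secondary difficulty but the entire analytic content of the theorem. Concretely, four statements are asserted but not proved: (a) that $\lambda(\alpha)\to\lambda^*$ as $\alpha\to 1$ (this needs a compactness/convergence argument for the rescaled regular solutions to the singular one); (b) that real indicial roots force $\lambda(\alpha)$ to be \emph{globally} strictly increasing, not merely eventually monotone near the equilibrium --- for $p\ge p_{\rm JL}$ this is the separation/intersection-number property and it is a theorem, not a consequence of the node structure alone; (c) that complex roots force infinitely many transversal crossings of the level $\lambda=\lambda^*$, which requires controlling the nonlinear orbit near the focus uniformly in $\alpha$; and (d) the uniqueness of the rupture solution, which is a substantial independent result (it is the content of \cite{MP88} for $\delta=1$, \cite{SZ95} for $\delta>1$, and \cite{GM20} for $\delta<1$) --- your remark that $\delta<N/2$ ``confines us to this single profile'' is not an argument. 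The paper closes all four gaps by citation: after reducing to the canonical equations it invokes \cite{JL72, M18, GM20} for $0<\delta<1$, \cite{JL72, MP88, MN20} for $\delta=1$, and \cite{JL72, SZ95, MN18} for $1<\delta<N/2$. Your proposal, read as a self-contained proof, would have to reprove those results; read as a reduction, it should cite them. Also note that uniqueness of the rupture solution requires first knowing that every solution of \eqref{E2} is radial, which the paper gets from \cite{GNN79}; your argument is purely radial from the start and does not address this.
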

Also, from the above result we deduce that if $\delta =1$ and $N\ge 3$ then  the following hold:
\begin{itemize}
\item If $3\le N\le 9$, then the bifurcation curve is of Type II.
\item  If $N\ge 10$, then the bifurcation curve is of Type I.
\end{itemize}

Fixing $\delta>0$ we are next interested in the range of $\lambda>0$ for which problem \eqref{E2} has regular solutions. We obtain the following result. 

\begin{theorem}\label{thmain2}
Let $N\ge 2$.
\begin{enumerate}
\item[(i)] Assume $\delta>0$. If \eqref{E2} has a regular solution then
\[
\lambda<\min\left\{ \frac{\mu_1}{4}, \frac{\mu_1}{\delta}\right\},
\]
where $\mu_1>0$ is the first eigenvalue of the Dirichlet Laplacian on $B$.
\item[(ii)] Assume {$\delta\ge N/2$}.
Then, there exists $\bar{\lambda}>0$ such that \eqref{E2} has exactly two radial regular solutions for each $\lambda\in (0,\bar{\lambda})$, exactly one radial regular solution for $\lambda=\bar{\lambda}$ and no radial regular solution for $\lambda>\bar{\lambda}$.
Moreover,
\begin{equation}\label{barl}
\bar{\lambda}\ge N\left(\frac{2}{\delta+1}\right)^{(\delta+1)/(\delta-1)}.
\end{equation}
\item[(iii)] Assume $\delta=N/2$. Then all the radial regular solutions of \eqref{E2} can be described as
\begin{equation}\label{L9E0}
(\lambda,U(r))=(2N\alpha (1-\alpha),\alpha(1-r^2)),\ \ 0<\alpha<1.
\end{equation}
In particular, \eqref{E2} has exactly two radial regular solutions for each $\lambda\in (0,N/2)$, exactly one radial regular solution for $\lambda=N/2$, and no radial regular solution for $\lambda>N/2$.
\end{enumerate}
\end{theorem}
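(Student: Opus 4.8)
The plan is to remove the gradient nonlinearity exactly as indicated after \eqref{phii}. Writing $w=1-U$ transforms \eqref{E2} into $w\,\Delta w=\lambda+\delta|\nabla w|^2$ in $B$ with $w=1$ on $\partial B$, and then $W=w^{1-\delta}$ (for $\delta\neq1$), respectively $v=-\log w$ (for $\delta=1$), kills the gradient term. For $\delta>1$ this yields
\[
-\Delta W=(\delta-1)\lambda\,W^{p'}\ \text{ in }B,\qquad W=1\ \text{ on }\partial B,\qquad p'=\frac{\delta+1}{\delta-1},
\]
with $W>1$ and $W(0)=(1-\alpha)^{-(\delta-1)}=:\beta$; for $\delta=1$ it gives the Gelfand problem $-\Delta v=\lambda e^{2v}$, $v=0$ on $\partial B$. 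Radial regular solutions of \eqref{E4'} correspond bijectively to radial solutions of these reduced problems, and the exponent $p'$ is subcritical, critical, or supercritical according as $\delta>N/2$, $\delta=N/2$, or $\delta<N/2$. The whole analysis is carried out in these variables.

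For part (i) I would test with the first Dirichlet eigenfunction $\varphi_1>0$. Discarding the (nonnegative) gradient term in \eqref{E2} and using the elementary bound $\tfrac{1}{1-U}\ge4U$ (i.e.\ $(2U-1)^2\ge0$) gives $\mu_1\int_B U\varphi_1\ge4\lambda\int_B U\varphi_1$, hence $\lambda<\mu_1/4$ (strict since $U\not\equiv\tfrac12$). For the second bound I test the reduced equation: with $\tilde W=W-1\in H^1_0(B)$, $\tilde W\ge0$, it reads $-\Delta\tilde W=(\delta-1)\lambda(\tilde W+1)^{p'}$, and convexity $(\tilde W+1)^{p'}\ge1+p'\tilde W$ yields $\mu_1\int\tilde W\varphi_1\ge(\delta-1)\lambda\big(\int\varphi_1+p'\int\tilde W\varphi_1\big)$; since $(\delta-1)p'=\delta+1$ this gives $\lambda<\mu_1/(\delta+1)\le\mu_1/\delta$. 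The analogous test with $v$ (case $\delta=1$) and with $W=w^{1-\delta}$ (case $0<\delta<1$) gives the same conclusion in every case.

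For part (iii), $\delta=N/2$ makes $p'=(N+2)/(N-2)$ critical, so the reduced radial equation is the critical Emden--Fowler equation with $W'(0)=0$. By uniqueness for the initial value problem its radial solution with any prescribed $W(0)$ is an Aubin--Talenti function $c(a^2+r^2)^{-(N-2)/2}$; imposing $W=1$ on $\partial B$ and returning to $U$ through $W=(1-U)^{-(N-2)/2}$ forces $1-U=(1-\alpha)+\alpha r^2$, that is $U=\alpha(1-r^2)$. Substituting this into \eqref{E2} and matching the constant and $r^2$ terms reproduces $\delta=N/2$ and $\lambda=2N\alpha(1-\alpha)$, i.e.\ \eqref{L9E0}. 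Analysing $\alpha\mapsto2N\alpha(1-\alpha)$ on $(0,1)$---increasing to its maximum $N/2$ at $\alpha=\tfrac12$, then decreasing to $0$---gives the stated count and simultaneously settles part (ii) in the critical case.

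For part (ii), the lower bound \eqref{barl} follows from an explicit supersolution of $-\Delta\tilde W=(\delta-1)\lambda(\tilde W+1)^{p'}$, $\tilde W=0$ on $\partial B$: taking $\tilde W=c(1-r^2)$ and optimizing in $c>0$ selects $c=\tfrac{\delta-1}{2}$, for which $-\Delta\tilde W=N(\delta-1)\ge(\delta-1)\lambda(\tilde W+1)^{p'}$ precisely when $\lambda\le N(2/(\delta+1))^{p'}$; together with the subsolution $0$ this produces a regular solution for all such $\lambda$, so $\bar{\lambda}\ge N(2/(\delta+1))^{p'}$. The exact count when $\delta>N/2$ I would obtain from the time map. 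Scaling the universal solution $V$ of $V''+\frac{N-1}{\rho}V'+V^{p'}=0$, $V(0)=1$, $V'(0)=0$, the condition $W(1)=1$ gives $\lambda=\lambda(s):=\frac{s^2V(s)^{p'-1}}{\delta-1}$ with $V(s)=1/\beta$, where $s$ runs over $(0,\rho_0)$ and $\rho_0<\infty$ is the first zero of $V$ (finite because $p'$ is subcritical). Since $\lambda(s)\to0$ as $s\to0^+$ and as $s\to\rho_0^-$, the number of radial regular solutions at level $\lambda$ equals the number of solutions of $\lambda(s)=\lambda$, and everything reduces to showing that $\lambda(s)$ has a single interior critical point. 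Here $\lambda'(s)=\frac{sV^{p'-2}}{\delta-1}\,g(s)$ with $g(s)=2V+(p'-1)sV'$, and the Fowler substitution $V(s)=s^{-2/(p'-1)}y(\log s)$ turns $g$ into a positive multiple of $y'$; thus one needs $y'$ to vanish exactly once along the relevant orbit of the autonomous equation $y''+(N-2-2m)y'+m(m+2-N)y+y^{p'}=0$, $m=\tfrac{2}{p'-1}$. When $\delta>N-1$ the coefficient $m(m+2-N)$ is positive, a direct computation gives $g'(s)<0$, and the single zero of $g$ is immediate. The delicate range is $N/2<\delta\le N-1$ (possible only for $N\ge3$): there the system is anti-damped, $y'$ need not be monotone, and the single crossing of $\{y'=0\}$ must be extracted from a phase-plane/energy monotonicity argument (or from known exact-multiplicity results for convex subcritical nonlinearities). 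This single-turning-point property is the main obstacle; once it is available the full count in (ii) follows, in agreement with (iii), while the borderline case $N=2$, $\delta=1$ is covered by the classical analysis of the Gelfand problem.
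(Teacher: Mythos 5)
Your reductions are the same as the paper's (pass to $u=(1-U)^{-(\delta-1)}-1$, resp.\ the log substitution, so that the problem becomes $-\Delta u=\tilde\lambda(1+u)^{p}$ with $p=(\delta+1)/(\delta-1)$ subcritical for $\delta>N/2$ and critical for $\delta=N/2$), and several of your steps are correct variants of the paper's: for the bound $\lambda<\mu_1/\delta$ in (i) you test the transformed equation and use convexity of $(1+u)^p$, where the paper instead sets $V=e^{\delta U}-1$ and tests $-\Delta V\ge\lambda\delta(1+V)$ --- your route even yields the slightly sharper $\lambda<\mu_1/(\delta+1)$; for \eqref{barl} you rebuild the supersolution $\tfrac{\delta-1}{2}(1-r^2)$ by hand where the paper cites Gazzola--Malchiodi; and for (iii) you classify the critical radial profiles directly via Aubin--Talenti plus ODE uniqueness, where the paper merely checks that \eqref{L9E0} solves \eqref{E2} and invokes the exact count of part (ii). (Two small caveats there: for $N=2$, $\delta=N/2=1$, the critical classification is the Liouville formula for $-\Delta v=\lambda e^{v}$, not Aubin--Talenti; and your derivation of \eqref{L9E0} should be checked to close the $N=2$ case as well.)

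The genuine gap is in part (ii): the statement that there is a single $\bar\lambda$ with the exact ``two/one/zero'' count. You correctly reduce this to showing that the time map $\lambda(s)=s^2V(s)^{p-1}/(\delta-1)$ has exactly one interior critical point, and your monotonicity computation for $g(s)=2V+(p-1)sV'$ does close the cases $N=2$ and $\delta\ge N-1$; but for $N\ge3$ and $N/2<\delta<N-1$ you explicitly leave the single-turning-point property unproven, and this is precisely the hard content of the claim. The paper does not attempt this from scratch: it observes that $1<p\le(N+2)/(N-2)$ and quotes the classical exact-multiplicity theorem for $-\Delta w=\lambda(1+w)^p$ on the ball (Proposition~\ref{PropJL}, i.e.\ Joseph--Lundgren for $N\ge3$ and Korman--Shi for $N=2$). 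Your parenthetical ``or from known exact-multiplicity results for convex subcritical nonlinearities'' is exactly the right move, but as written the proof of (ii) is incomplete without either citing that theorem precisely or supplying the missing phase-plane/energy argument in the anti-damped range.
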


\noindent{\bf Remark. }
A parameter $\bar{\lambda}>0$ is called the extremal value for \eqref{E2} if 
\begin{itemize}
\item \eqref{E2} has a regular solution for $\lambda<\bar{\lambda}$.
\item \eqref{E2} has no regular solution for $\lambda>\bar{\lambda}$.
\end{itemize}
The solution for $\lambda=\bar{\lambda}$ is called the extremal solution of \eqref{E2}. By Theorem~\ref{thmain1} and Theorem~\ref{thmain2} we easily deduce the following.  

\begin{corollary} The following hold true.
\begin{enumerate}
\item[(i)] If $0<\delta\le(N-2-2\sqrt{N-1})/2$, then $\blambda=\lambda^*$ and the extremal solution is singular;
\item[(ii)]  If $(N-2-2\sqrt{N-1})/2<\delta<N/2$, then 
$$
\lambda^*<\blambda<\min\left\{\frac{\mu_1}{4},\frac{\mu_1}{\delta}\right\}
$$
and the extremal solution is regular;
\item[(iii)] If $\delta\ge N/2$, then 
$$
N\left(\frac{2}{\delta+1}\right)^{(\delta+1)/(\delta-1)}\le{\blambda}<\min\left\{\frac{\mu_1}{4},\frac{\mu_1}{\delta}\right\}.
$$
\end{enumerate}
\end{corollary}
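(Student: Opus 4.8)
The plan is to read off the global shape of the bifurcation curve $\calC=\{(\alpha,\lambda(\alpha)):0<\alpha<1\}$ from Theorem~\ref{thmain1} in each range of $\delta$, and then combine this with the a priori bound of Theorem~\ref{thmain2}(i) and the two-solution description of Theorem~\ref{thmain2}(ii). Throughout I use the already established facts that $\lambda(\alpha)\to 0$ as $\alpha\to 0$, that $\lambda(\alpha)\to\lambda^*=N-1-\delta$ as $\alpha\to 1$, and that the limiting state $\alpha\to 1$ corresponds precisely to the rupture solution $U^*=1-r$. The extremal value $\blambda$ is then identified as $\sup_{0<\alpha<1}\lambda(\alpha)$, so the whole question reduces to deciding whether this supremum equals $\lambda^*$ or exceeds it, and whether it is attained at an interior $\alpha\in(0,1)$ (regular extremal solution) or only in the limit $\alpha\to 1$ (singular extremal solution).

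For part (i), the hypothesis $0<\delta\le(N-2-2\sqrt{N-1})/2$ places us in the Type~I regime of Theorem~\ref{thmain1}(i), so $\lambda(\alpha)$ increases strictly from $0$ to $\lambda^*$. Hence $\lambda(\alpha)<\lambda^*$ for every $\alpha\in(0,1)$, there is exactly one regular solution for each $\lambda\in(0,\lambda^*)$ and none for $\lambda\ge\lambda^*$. Consequently $\blambda=\lambda^*$, the supremum is attained only in the limit $\alpha\to1$, and the extremal solution is the rupture solution $U^*$, which is singular.

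For part (ii), the hypothesis $(N-2-2\sqrt{N-1})/2<\delta<N/2$ puts us in the Type~II regime of Theorem~\ref{thmain1}(ii), where $\lambda(\alpha)$ oscillates around $\lambda^*$ as $\alpha\to1$. The key point is to conclude from this oscillation that $\sup_{0<\alpha<1}\lambda(\alpha)>\lambda^*$ and that the supremum is attained at some interior $\alpha^*\in(0,1)$: since $\lambda(\alpha)$ takes values strictly above $\lambda^*$ for a sequence $\alpha\to1$ while $\lambda(\alpha)\to\lambda^*$ at that endpoint and $\lambda(\alpha)\to0$ at the other, continuity of $\lambda(\cdot)$ on $(0,1)$ with both boundary limits lying strictly below the supremum forces an interior maximum. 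This yields $\blambda>\lambda^*$ and shows the extremal solution is regular. The upper bound $\blambda<\min\{\mu_1/4,\mu_1/\delta\}$ is then immediate from Theorem~\ref{thmain2}(i) applied to this regular extremal solution.

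For part (iii), the regime $\delta\ge N/2$ is covered directly by Theorem~\ref{thmain2}(ii): it furnishes the extremal value $\blambda$ together with the lower bound $\blambda\ge N(2/(\delta+1))^{(\delta+1)/(\delta-1)}$, while the upper bound $\blambda<\min\{\mu_1/4,\mu_1/\delta\}$ follows again from Theorem~\ref{thmain2}(i), since at $\lambda=\blambda$ a regular solution exists by Theorem~\ref{thmain2}(ii). I expect the only genuine obstacle to lie in the Type~II argument of part (ii): one must rule out the possibility that the oscillations of $\lambda(\alpha)$ remain below $\lambda^*$ (which would give $\blambda=\lambda^*$) and confirm that the extremal value is realised by a regular, rather than singular, solution. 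Everything else is a direct transcription of the two cited theorems.
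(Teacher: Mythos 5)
Your proposal is correct and follows essentially the same route as the paper, which gives no explicit proof but states that the corollary is easily deduced from Theorems~\ref{thmain1} and~\ref{thmain2}; your identification of $\blambda$ with $\sup_{0<\alpha<1}\lambda(\alpha)$ and the case analysis via Type~I versus Type~II curves (plus the a priori bounds of Theorem~\ref{thmain2}) is exactly the intended deduction. The interior-maximum argument you give in part~(ii) correctly fills in the one detail the paper leaves implicit.
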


\smallskip
We turn next to the study of rupture solutions to \eqref{E2}. From Theorem~\ref{thmain1} above we see that if $0<\delta<N/2$ then \eqref{E2} has a unique rupture solution given by \eqref{rupture1}. The result below presents our findings for $\delta\geq N/2$. 

\begin{theorem}\label{thmain3}
Assume $\delta\geq N/2$. The following hold true.
\begin{enumerate}
\item[(i)] If $N=2$, then there exists $\lambda^{**}>0$ such that  for each $\lambda\in (0,\lambda^{**})$, \eqref{E2} has infinitely many rupture solutions. Furthermore, if $\delta=1$ we have $\lambda^{**}=1$. 
\item[(ii)] If $N\geq 3$ and $N/2\leq \delta<N-1$ then, for any 
\begin{equation}\label{l3}
0<\lambda< \lambda^{***}:=\frac{\delta(N-1-\delta)}{\delta-1},
\end{equation}
problem \eqref{E2} has infinitely many rupture solutions $U(r)$ satisfying the following:
\begin{enumerate}
\item[(ii1)] For $N/2<\delta<N-1$, then
\begin{equation}\label{L7E-2}
U(r)=1-\sqrt{\frac{\lambda}{N-1-\delta}}r(1+o(1))\quad\textrm{as}\quad r\to 0.
\end{equation}
\item[(ii2)] For $\delta=N/2$, then there exists $c>1$ such that
\begin{equation}\label{L7E-1}
1-\sqrt{\frac{2\lambda}{N}}cr\le U(r)<1-\sqrt{\frac{2\lambda}{N}}r
\quad\textrm{for}\quad 0<r<1.
\end{equation}
\end{enumerate}
\item[(iii)] If $N\geq 3$ and $\delta\geq N-1$, then there exists $\lambda^{****}>0$ such that  for each $\lambda\in (0,\lambda^{****})$, problem \eqref{E2} has  infinitely many rupture solutions.

\end{enumerate}
\end{theorem}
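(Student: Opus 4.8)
The plan is to eliminate the gradient term exactly as in the reduction leading to \eqref{phii}, but to work with the rupture profile directly. Writing $w=1-U$, the radial problem becomes
\[
w''+\frac{N-1}{r}w'=\frac{\lambda+\delta (w')^2}{w},\qquad w(0)=0,\quad w(1)=1,\quad w>0\ \text{in}\ (0,1],
\]
which is invariant under the scaling $w(r)\mapsto \mu^{-1}w(\mu r)$. For $\delta<N-1$ the rupture rate suggested by \eqref{L7E-2} is linear, so I substitute $w(r)=r\,\rho(s)$ with $s=\ln r$; a direct computation turns the equation into the autonomous second–order ODE
\[
\rho''+N\rho'+(N-1)\rho=\frac{\lambda+\delta(\rho+\rho')^2}{\rho},\qquad s\in(-\infty,0].
\]
Its unique positive equilibrium is $\rho\equiv c$ with $c=\sqrt{\lambda/(N-1-\delta)}$, i.e. precisely the coefficient in \eqref{L7E-2}; the boundary condition $w(1)=1$ reads $\rho(0)=1$, and the rupture asymptotics $w\sim cr$ becomes $\rho(s)\to c$ as $s\to-\infty$. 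Thus the whole problem is recast as: find trajectories of an autonomous ODE on $(-\infty,0]$ converging to $(c,0)$ in backward time, positive throughout, and hitting $\rho=1$ at $s=0$.

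The key observation is the linearisation at $\rho=c$, whose characteristic equation is $\mu^2+(N-2\delta)\mu+2(N-1-\delta)=0$. For $N/2<\delta<N-1$ the sum of roots $2\delta-N$ is positive and the product $2(N-1-\delta)$ is positive, so \emph{both} exponents have positive real part $\delta-\frac N2$. Hence the equilibrium is a backward attractor (an unstable node or, when $\delta<\tfrac{N-2+2\sqrt{N-1}}2$, an unstable focus) whose basin is a full two–dimensional neighbourhood. I would exploit this by a backward shooting: integrate from $s=0$ with $\rho(0)=1$, $\rho'(0)=p$, towards $s=-\infty$. Since the singular right–hand side forces $\rho''\to+\infty$ as $\rho\to0^+$, the level $\{\rho=0\}$ is an impassable barrier, so trajectories entering the basin remain positive. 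Because the outward orbits emanating from $(c,0)$ sweep across the section $\{\rho=1\}$ in an open set of slopes $p$, each such $(1,p)$ lies on the unstable manifold and its backward orbit returns monotonically (node) or spirals (focus) into $(c,0)$ while staying in a positive region; this produces a one–parameter continuum of rupture solutions, all with the asymptotics \eqref{L7E-2}. In the focus range the spiralling additionally organises them by winding number. The admissibility requirement $c<1$ together with the positivity/no–blow–up control is what confines $\lambda$ and, after optimisation, yields the explicit threshold $\lambda^{***}$ in \eqref{l3}.

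The borderline case $\delta=N/2$ in (ii2) is genuinely different: the exponents are purely imaginary, the linearisation predicts a centre, and the open–basin argument collapses. Here I would abandon the phase portrait and instead build explicit barriers, checking that $\underline w(r)=\sqrt{2\lambda/N}\,r$ and $\overline w(r)=c\sqrt{2\lambda/N}\,r$ (for a suitable $c>1$) are respectively a sub– and a supersolution, so that every rupture solution is trapped in the annular region of \eqref{L7E-1}; the slow oscillation inside this trapped region then again yields infinitely many solutions. The remaining regimes follow the same scheme after changing the self–similar exponent. For $N\ge3$ and $\delta\ge N-1$ (case (iii)) the linear rate is inadmissible because $N-1-\delta\le0$, so I substitute $w(r)=r^{\beta}\rho(\ln r)$ with $\beta=(N-2)/(\delta-1)\le1$; at leading order the $\lambda$–term drops out (the gradient nonlinearity dominates), leaving an autonomous equation whose equilibrium is again a backward attractor and which produces a threshold $\lambda^{****}$. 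For $N=2$ (case (i)) the exponent degenerates ($\beta$ is ill–defined), so a logarithmic change of variables is required, with the value $\lambda^{**}=1$ emerging in the triply degenerate case $\delta=1=N-1=N/2$.

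The main obstacle is not the linear spectral computation, which is immediate, but the global nonlinear control needed to turn it into genuine solutions on all of $(0,1)$: one must show that the selected trajectories neither blow up in finite $s$ nor lose positivity before returning to $\{s=0\}$, and that the admissible set of shooting data is indeed a nondegenerate interval. The singular barrier at $\rho=0$ is the principal tool for positivity, but quantifying how far the outward orbits reach for a given $\lambda$ is exactly what pins down the sharp thresholds $\lambda^{**},\lambda^{***},\lambda^{****}$. The hardest point is the non–hyperbolic centre $\delta=N/2$, where no stable/unstable dichotomy is available and the two–sided bound \eqref{L7E-1} must be produced by hand from carefully tuned sub– and supersolutions rather than from the local phase portrait.
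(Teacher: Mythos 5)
Your reduction for case (ii) with $N/2<\delta<N-1$ is in the same spirit as the paper's: after a power-type change of unknown and the Emden--Fowler variable $\log r$, one gets an autonomous planar system whose equilibrium encodes the linear rupture rate $\sqrt{\lambda/(N-1-\delta)}$, and your characteristic exponents agree with the paper's. But the step you yourself flag as ``the main obstacle'' is exactly the content of the proof, and you do not supply it. The paper closes it with an explicit Lyapunov function $E(x,y)=\tfrac12 y^2-(\delta-1)(N-1-\delta)\bigl(\tfrac{x^2}{2}-\tfrac{x^{p+1}}{p+1}\bigr)$: the tear-shaped region $\Omega=\{x>0,\ E<0\}$ is invariant in the direction $t=-\log r\to\infty$ because $\tfrac{d}{dt}E=-(2\delta-N)y^2\le 0$, so every orbit starting in $\Omega$ stays positive and bounded, and Poincar\'e--Bendixson forces convergence to $(1,0)$. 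This also exposes an error in your mechanism for the threshold: you claim $\lambda^{***}$ comes from ``$c<1$'', but $c<1$ is $\lambda<N-1-\delta=\lambda^*$, which is strictly smaller than $\lambda^{***}=\delta(N-1-\delta)/(\delta-1)$. The correct constraint is that the boundary datum $x(0)=\bigl(\lambda/(N-1-\delta)\bigr)^{(\delta-1)/2}$ lie inside $\Omega$, i.e.\ $x(0)<x_\delta=\{\delta/(\delta-1)\}^{(\delta-1)/2}$, which is exactly $\lambda<\lambda^{***}$; the one-parameter family is then $\{|y(0)|<y_{\delta,x(0)}\}$. For $\delta=N/2$ the paper does not need sub/supersolutions: the system is Hamiltonian, orbits in $\Omega$ are periodic, and the two-sided bound \eqref{L7E-1} is read off from $c_0\le x(t)<x_\delta$; your barrier sketch neither justifies a comparison principle for this singular quasilinear ODE nor explains why the trapped region contains infinitely many solutions.

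Parts (i) and (iii) of your proposal would not go through as written. For (iii), after the substitution $v=(1-U)^{1-\delta}$ the exponent is $p=(\delta+1)/(\delta-1)\in(1,N/(N-2)]$, and in this range the equation $v''+\tfrac{N-1}{r}v'+\tilde\lambda v^p=0$ has \emph{no} positive self-similar singular solution (the would-be amplitude requires $N-2-\tfrac{2}{p-1}>0$, which fails), so there is no ``equilibrium backward attractor'' for your ansatz $w=r^\beta\rho(\ln r)$; singular solutions here behave like the fundamental solution $r^{2-N}$ (with a logarithmic correction at $p=N/(N-2)$, cf.\ \eqref{avileslim}). This is precisely why the paper switches to $t=r^{2-N}$ and invokes Proposition~\ref{pcon} (producing solutions with $z(t)/t\to m$ for an interval of $m$, hence infinitely many), together with the Ni--Sacks/Aviles construction at the critical exponent. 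For (i) you give no existence mechanism at all: the paper uses the explicit Tello two-parameter family for the Liouville equation when $\delta=1$ (which is where $\lambda^{**}=1$ comes from) and again Proposition~\ref{pcon} when $\delta>1$. So while your phase-plane picture for (ii1) is the right heuristic, the proposal is missing the invariant-region/Lyapunov argument that makes it rigorous, misderives $\lambda^{***}$, and rests on an invalid ansatz in case (iii).
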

We see that for $N/2\leq \delta<N-1$ we have
\begin{equation}\label{2lambda}
\lambda^*=N-1-\delta<\lambda^{***}=\delta(N-1-\delta)/(\delta-1)
\end{equation}
and that the rupture solution \eqref{rupture1} is included in both the cases (ii1) and (ii2) in Theorem~\ref{thmain3} above.

The results in Theorems~\ref{thmain1}-\ref{thmain3} can be sumarized in the tables below.
\begin{table}[h]
\begin{tabular}{|c|c|c|c|}
\hline
$\delta$ & $(0,1)$ & $1$ & $(1,\infty)$ \\
\hline
regular solutions & $(0,\lambda^*)$ & $(0,1]$ & $(0,\bar{\lambda}]$\rule[-2mm]{0mm}{6mm}\\
\hline
rupture solutions & $\lambda^*$ & (0,1) & $(0, \lambda^{**})$\rule[-2mm]{0mm}{6mm}\\
\hline
\end{tabular}
\vspace{0.3cm}
\caption{Case $N=2$; The range of $\lambda$ for which problem \eqref{E2} has a regular/rupture solution.}
\end{table}

\begin{table}[h]
\begin{tabular}{|c|c|c|c|c|}
\hline
$\delta $ & $(0,\frac{N}{2})$ & $\frac{N}{2}$ & $(\frac{N}{2},N-1)$
& $[N-1,\infty)$\rule[-2mm]{0mm}{6mm} \\
\hline
regular solutions&  $(0,\lambda^*)$ & $(0,\frac{N}{2}]$ & $(0,\bar{\lambda}]$ & $(0,\bar{\lambda}]$\rule[-2mm]{0mm}{6mm}\\
\hline
rupture solutions& $\lambda^*$ & $(0,\frac{N}{2})$ & {$(0,\lambda^{***})$} & {$(0,\lambda^{****})$}\rule[-2mm]{0mm}{6mm}\\
\hline
\end{tabular}
\vspace{0.3cm}
\caption{Case $N\geq 3$; The range of $\lambda$ for which problem \eqref{E2} has a regular/rupture solution.}
\end{table}

We see that in the study of regular solutions to \eqref{E2} we identify two critical parameters $\lambda^*$ and $\bar\lambda$ while in the study of rupture solutions we identify four critical parameters $\lambda^*$, $\lambda^{**}$, $\lambda^{***}$ and $\lambda^{****}$. Also $\lambda^*<\lambda^{***}$ (see \eqref{2lambda}).

\section{Some preliminary results}

In this section we collect some useful results in our approach to prove Theorems \ref{thmain1}-\ref{thmain3}.

\begin{proposition}[{see \cite{JL72, KS01}}]\label{PropJL}
Consider the problem
\begin{equation}\label{PropJLE0}
\begin{cases}
w''+\frac{N-1}{r}w'+\lambda (w+1)^{p}=0 & \textrm{for}\ 0<r<1,\\
w'(0)=0,\\
w(1)=0,
\end{cases}
\end{equation}
where $1<p\leq (N+2)/(N-2)$ if $N\ge 3$, and $1<p<\infty$ if $N=2$.
Then, there exists $\lambda_*>0$ such that:
\begin{itemize}
\item Problem \eqref{PropJLE0} has exactly two regular solutions for $0<\lambda<\lambda_*$;
\item Problem \eqref{PropJLE0} has exactly one regular solution for $\lambda=\lambda_*$;
\item Problem \eqref{PropJLE0} has no regular solutions for $\lambda>\lambda_*$.
\end{itemize}
\end{proposition}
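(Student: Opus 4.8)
The plan is to reduce the two-parameter boundary value problem to the analysis of a single scalar ``time map'' and then to prove that this map is unimodal.

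First I would normalize. Setting $v=w+1$, a regular solution corresponds to $v''+\tfrac{N-1}{r}v'+\lambda v^{p}=0$ on $(0,1)$ with $v'(0)=0$, $v(1)=1$, $v>1$ in $[0,1)$; from $(r^{N-1}v')'=-\lambda r^{N-1}v^{p}<0$ one checks $v$ is strictly decreasing, so $v$ is determined by its maximum $\beta:=v(0)>1$. The equation is invariant under the scaling $v(r)=\beta V(\mu r)$ with $\mu^{2}=\lambda\beta^{p-1}$, where $V$ solves the parameter-free initial value problem
\[
V''+\tfrac{N-1}{s}V'+V^{p}=0,\qquad V(0)=1,\quad V'(0)=0 .
\]
Here $V$ is unique and strictly decreasing while positive, hence invertible. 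The Dirichlet condition $v(1)=1$ becomes $V(\mu)=1/\beta$, which fixes $\mu=\mu(\beta):=V^{-1}(1/\beta)$ and therefore $\lambda=\Lambda(\beta):=\mu(\beta)^{2}\beta^{1-p}$. Thus each $\beta>1$ yields exactly one solution, and the number of regular solutions for a given $\lambda$ equals the number of $\beta\in(1,\infty)$ with $\Lambda(\beta)=\lambda$. The proposition is therefore equivalent to showing that the smooth function $\Lambda$ is unimodal: strictly increasing, then strictly decreasing, with a single interior maximum $\lambda_{*}:=\max\Lambda$.

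Next I would compute the endpoint behaviour of $\Lambda$. As $\beta\to1^{+}$ we have $1/\beta\to1^{-}$, so $\mu(\beta)=V^{-1}(1/\beta)\to0$ and $\Lambda(\beta)\to0$. The limit $\beta\to\infty$ (that is $1/\beta\to0^{+}$) is where the growth restriction on $p$ enters: for $1<p<(N+2)/(N-2)$ no positive decaying entire solution of $\Delta V+V^{p}=0$ exists, so $V$ must vanish at a finite radius $s_{0}$, whence $\mu(\beta)\to s_{0}$ and $\Lambda(\beta)=s_{0}^{2}\beta^{1-p}\to0$ since $p>1$; at the critical exponent $p=(N+2)/(N-2)$ the profile $V$ is the explicit Aubin--Talenti bubble with $V(s)\sim c\,s^{-(N-2)}$, giving $\mu(\beta)\sim c'\beta^{1/(N-2)}$ and again $\Lambda(\beta)\to0$. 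In all admissible cases $\Lambda>0$ on $(1,\infty)$ with $\Lambda\to0$ at both endpoints, so $\Lambda$ attains a positive interior maximum.

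The heart of the matter is to show that \emph{every} critical point of $\Lambda$ is a strict local maximum, for then two critical points are impossible (a local minimum would have to lie between two maxima) and unimodality follows. A relation $\Lambda'(\beta_{0})=0$ occurs precisely when the corresponding solution $w_{0}$ is degenerate, i.e.\ the linearized Dirichlet problem $\psi''+\tfrac{N-1}{r}\psi'+\lambda p\,(w_{0}+1)^{p-1}\psi=0$, $\psi'(0)=0$, $\psi(1)=0$, admits a nontrivial radial solution $\psi$. I would then invoke the Crandall--Rabinowitz bending formula: the sign of $\Lambda''(\beta_{0})$ is controlled, up to a positive factor, by $-\int_{0}^{1}(w_{0}+1)^{p-2}\psi^{3}\,r^{N-1}\,dr$. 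Since the nonlinearity has $f''(w)=p(p-1)(w+1)^{p-2}>0$ and, on the lowest (stable) branch, $\psi$ is the principal eigenfunction and hence one-signed, this integral is positive and $\Lambda''(\beta_{0})<0$.

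The main obstacle is to guarantee that $\psi$ is one-signed at \emph{every} critical point, not merely the first, equivalently to rule out a second fold. This is exactly where subcriticality is indispensable: for $(N+2)/(N-2)<p<p_{JL}$ the branch spirals around a positive singular value and produces infinitely many solutions, and what forbids this for $1<p\le(N+2)/(N-2)$ is the Gidas--Spruck a priori bound, which (through blow-up and the Liouville theorem) prevents solutions of large norm at any $\lambda$ bounded away from $0$, so the branch has both ends at $\lambda=0$ and cannot accumulate at a positive value. Together with the bending computation this confines all degeneracies to principal-eigenvalue folds and leaves a single turning point, yielding the ``exactly two / one / none'' trichotomy. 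I expect the most delicate point to be the critical case $p=(N+2)/(N-2)$, where compactness fails and one must instead exploit the explicit Talenti profile and the precise decay of $V$ to carry out the same endpoint and monotonicity analysis, following \cite{JL72, KS01}.
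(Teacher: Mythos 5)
The paper does not actually prove this proposition: it is quoted from the literature, with the case $N\ge 3$ attributed to \cite[Section X]{JL72} (phase--plane/Emden--Fowler analysis of the bifurcation diagram) and the case $N=2$ to \cite[Theorem 2.6]{KS01}. Your reduction to the time map $\Lambda(\beta)=\mu(\beta)^2\beta^{1-p}$, the computation that $\Lambda\to 0$ at both endpoints (including the critical case via the Talenti profile), and the strategy ``every critical point of $\Lambda$ is a strict local maximum, hence there is exactly one'' are all correct and are essentially the Korman--Li--Ouyang scheme underlying \cite{KS01}.

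However, there is a genuine gap at the central step. The bending formula only gives $\Lambda''(\beta_0)<0$ at a degenerate solution \emph{provided} the nontrivial solution $\psi$ of the linearized problem is of one sign, and your justification for this is circular: you argue that $\psi$ is the principal eigenfunction ``on the lowest (stable) branch,'' but at a hypothetical second fold the solution would lie on an upper branch where the vanishing eigenvalue could be the second one and $\psi$ could change sign. The Gidas--Spruck a priori bound does not repair this; it shows the branch cannot escape to infinity at a positive $\lambda$ and hence that both ends of the curve sit at $\lambda=0$, but a curve with both ends at $0$ can still have several folds unless sign-changing kernels are excluded. Proving that $\psi$ cannot change sign at \emph{any} degenerate solution is precisely the hard technical core of \cite{KS01} (and of the corresponding exact-multiplicity results for $N\ge 3$); it requires additional structural input, e.g.\ comparison of $\psi$ with the test function $rw'+\theta(w+1)$ together with Pohozaev-type identities, and it is exactly where the restriction $p\le (N+2)/(N-2)$ is used quantitatively rather than only through the a priori bound. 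As written, your argument would apply verbatim to supercritical $p<p_{\rm JL}$, where the conclusion is false (the diagram has infinitely many folds), which confirms that the missing step is where the real content lies.
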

The case $N\geq 3$ in the above result is discussed in \cite[Section X]{JL72} while the case $N=2$ follows from \cite[Theorem 2.6]{KS01}.

\begin{proposition}[{see \cite{C05}}]\label{pcon}
Let $f:[0, \infty)\times \R\to \R$ and $g:[0,\infty)\times [0, \infty)\to [0, \infty)$ be continuous functions such that:
\begin{enumerate}
\item[(i)] $|f(t,z)|\leq g(t, |z|)$ for all $z\in \R$, $t\geq 0$;
\item[(ii)] the mapping $[0, \infty)\ni z\longmapsto g(t, z)$ is nondecreasing for all $t\geq 0$.
\end{enumerate}
Then, for every $m>0$ for which
\begin{equation}\label{intcon}
\int_0^\infty g(t, 2mt)dt<m,
\end{equation}
there exists a global solution $z:[0, \infty)\to \R$ of 
\[
\begin{cases}
z''(t)+f(t, z(t))=0, \; z>0 & \mbox{ for all }\, t>0,\\
z(0)=0, &
\end{cases}
\] such that $z(t)/t\to m$ as $t\to \infty$.
\end{proposition}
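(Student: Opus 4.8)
The plan is to recast the problem as a fixed-point equation for an integral operator and apply Schauder's theorem in a suitable weighted function space. Since we want a solution with $z(0)=0$ and $z(t)/t\to m$, hence $z'(t)\to m$, integrating the equation $z''=-f(\cdot,z)$ first from $t$ to $\infty$ and then from $0$ to $t$ suggests defining
\[
(Tz)(t)=mt+\int_0^\infty \min(t,s)\,f(s,z(s))\,ds,
\]
whose fixed points are precisely the global solutions of the ODE carrying the prescribed asymptotics. I would work in the weighted space $X=\{z\in C[0,\infty):\sup_{t\ge 0}|z(t)|/(1+t)<\infty\}$ with norm $\|z\|=\sup_{t\ge 0}|z(t)|/(1+t)$, and in the closed, convex, bounded subset $K=\{z\in X:0\le z(t)\le 2mt \text{ for all } t\ge 0\}$.

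First I would verify that $T$ maps $K$ into itself. For $z\in K$, hypotheses (i) and (ii) give $|f(s,z(s))|\le g(s,|z(s)|)\le g(s,2ms)$, and using $\min(t,s)\le t$ together with the standing assumption $\int_0^\infty g(s,2ms)\,ds<m$ yields
\[
\Big|\int_0^\infty \min(t,s)\,f(s,z(s))\,ds\Big|\le t\int_0^\infty g(s,2ms)\,ds< mt ,
\]
so that $(Tz)(0)=0$ and $0<(Tz)(t)<2mt$ for $t>0$. Thus invariance of $K$, strict positivity, and the correct linear growth are all encoded by the single integrability hypothesis.

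Next I would establish compactness of $T$ on $K$, which I expect to be the main obstacle since $[0,\infty)$ is noncompact and the ordinary Arzel\`a--Ascoli theorem does not apply directly. Two estimates carry this through. Differentiating the perturbation shows $(Tz)'(t)=m+\int_t^\infty f(s,z(s))\,ds$, which is bounded by $2m$ uniformly in $z\in K$, giving equi-Lipschitz continuity and hence equicontinuity on compact intervals. For the behaviour at infinity I would split $\int_0^\infty \tfrac{\min(t,s)}{t}\,g(s,2ms)\,ds$ into $\int_0^A$ and $\int_A^\infty$ and check, via a routine $\varepsilon$-argument, that $(Tz)(t)/t\to m$ uniformly in $z\in K$ as $t\to\infty$; this uniform tail control (equiconvergence at infinity) is exactly what upgrades Arzel\`a--Ascoli to a compactness criterion in the weighted space $X$. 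Continuity of $T$ then follows from the dominated convergence theorem, again with dominating function $g(\cdot,2m\,\cdot)$.

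Finally, Schauder's fixed point theorem furnishes a $z=Tz\in K$. Differentiating the integral identity twice recovers $z''=-f(\cdot,z)$, while $z(0)=0$ and $z(t)>0$ for $t>0$ come from the strict bound above. The asymptotics follow from $z'(t)=m+\int_t^\infty f(s,z(s))\,ds$ together with $\int_0^\infty |f(s,z(s))|\,ds\le\int_0^\infty g(s,2ms)\,ds<\infty$, which forces $z'(t)\to m$ and hence $z(t)/t\to m$. I anticipate that the only genuinely delicate step is the justification of the weighted compactness criterion on the half-line; the remaining verifications are elementary consequences of the bound $\int_0^\infty g(s,2ms)\,ds<m$.
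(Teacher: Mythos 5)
The paper does not prove this proposition; it is imported verbatim from Constantin \cite{C05}, so there is no internal proof to compare against. Your argument is correct and is essentially the standard proof of this result: the operator $(Tz)(t)=mt+\int_0^\infty \min(t,s)f(s,z(s))\,ds$ is exactly the integral reformulation obtained by integrating $z''=-f(\cdot,z)$ from $t$ to $\infty$ and then from $0$ to $t$, the invariance of $K=\{0\le z(t)\le 2mt\}$ follows from the strict inequality $\int_0^\infty g(s,2ms)\,ds<m$, and the bound $|(Tz)'|\le 2m$ together with the uniform tail estimate gives the compactness needed for Schauder. All the individual verifications you sketch (self-mapping, equicontinuity, uniform convergence of $(Tz)(t)/t$ to $m$, continuity via dominated convergence, recovery of the ODE and of the asymptotics from $z'(t)=m+\int_t^\infty f(s,z(s))\,ds$) check out. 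The one point you rightly flag as delicate --- the compactness criterion in the weighted space $X$ --- is genuinely the least routine step of your version; it is a standard extension of Arzel\`a--Ascoli (boundedness, local equicontinuity, and equiconvergence at infinity), but if you want to avoid invoking it you can instead run the Schauder--Tychonoff fixed point theorem in the Fr\'echet space $C([0,\infty))$ with the topology of uniform convergence on compact sets, where closed bounded locally equicontinuous subsets are already compact; this is in effect what \cite{C05} does, and it trades your tail estimate for a slightly less elementary fixed point theorem. Either route is sound.
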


\section{Proof of Theorem \ref{thmain1}}
Let $U$ be a solution of \eqref{E4'}. The proof will be divided into three cases.
\medskip

\noindent{\it Case 1: }$0<\delta <1$. 
Let $u(r):=1-(1-U(r))^{1-\delta}$ and $\tlambda:=(1-\delta)\lambda$.
Then (\ref{E4'}) reads
\begin{equation}\label{Ev1}
\begin{cases}
u''+\frac{N-1}{r}u'+\frac{\tlambda}{(1-u)^p}=0 & \textrm{for}\ 0<r<1,\\
u(0)=1-(1-\alpha)^{1-\delta},\\
u'(0)=0,\\
u(1)=0,
\end{cases}
\end{equation}
where $p:=(1+\delta)/(1-\delta)>1$.
This is equivalent to problem (\ref{E2v1}) with $\delta=0$ and has been studied by many authors.
The existent results in \cite{GM20, M18} for (\ref{Ev1}) can be summarized as follows:
\begin{itemize}
\item If $p>1$, then
\begin{equation}\label{Ev1E1}
\tlambda\to\tlambda^*:=\frac{2}{p+1}\left(N-2+\frac{2}{p+1}\right)\quad\mbox{ as }\; u(0) \to 1.
\end{equation}
\item If $p>1$, then (\ref{Ev1}) has a unique rupture solution $(\tlambda^*,1-r^{2/(p+1)})$.
\item If $p\le p_c$, then the bifurcation curve is of Type I.
\item If $p>p_c$, then the bifurcation curve is of Type II.
\end{itemize}
Here,
\[
p_c:=
\begin{cases}
\infty & \textrm{if}\ N\ge 10,\\
-1+\frac{4}{4-N+2\sqrt{N-1}} & \textrm{if}\ 2\le N<10.
\end{cases}
\]
The bifurcation was discussed in \cite[Theorem 1.2]{M18} while the uniqueness of the rupture solution and the convergence (\ref{Ev1E1}) was recently obtained in \cite{GM20}.
Note that $\lambda=\frac{\tlambda}{1-\delta}\to N-1-\delta=\lambda^*$.
Since $p=(1+\delta)/(1-\delta)$, we see that if $(N-2-2\sqrt{N-1})/2<\delta<1$ (resp. $\delta\le (N-2-2\sqrt{N-1})/2$), then $p>p_c$ (resp. $p\le p_c$).
The assertions (i) and (ii) follow from the above bifurcation result for (\ref{Ev1}).

\medskip

\noindent{\it Case 2:} $\delta =1$. 
Let $u(r):=-2\log(1-U(r))$ and $\tlambda=2\lambda$. Then, $u$ satisfies 
\begin{equation}\label{Ev3}
\begin{cases}
u''+\frac{N-1}{r}u'+\tlambda e^u=0 & \textrm{for}\ 0<r<1,\\
u(0)=-2\log (1-\alpha)>0,\\
u'(0)=0,\\
u(1)=0.
\end{cases}
\end{equation}

Classical results \cite{JL72, MP88, MN20} for (\ref{Ev3}) yield:
\begin{itemize}
\item If $N>2$, then  $\tlambda\to\tlambda^*:=2(N-2)$ as $u(0)\to\infty$.
\item If $N>2$, then (\ref{Ev3}) has a unique singular solution $(\tlambda^*,-2\log r)$.
\item If $2<N<10$, then the bifurcation curve $\tlambda(u(0))$ oscillates around $\tlambda^*$ as $u(0)\to\infty$.
\item If $N\ge 10$, then the bifurcation curve $\tlambda(u(0))$ is strictly increasing in $u(0)$.
\end{itemize}
We refer the reader to  {\cite[Section IX]{JL72}} for the bifurcation result and the convergence, to \cite{MP88} for the uniqueness of the singular solution as well as to \cite{MN20} for the uniqueness and convergence.
Note that $\lambda=\frac{\tlambda}{2}\to N-2$.
The assertions (i) and (ii) in Theorem~\ref{thmain1} follow from the above results on \eqref{Ev3}.

\medskip

\noindent{\it Case 3:} $1<\delta <N/2$, $N\geq 3$. 
Let $u(r):=(1-U(r))^{-(\delta-1)}-1$ and $\tlambda=(\delta-1)\lambda$.
Then (\ref{E4'}) becomes
\begin{equation}\label{Ev2}
\begin{cases}
u''+\frac{N-1}{r}u'+\tlambda(u+1)^p=0 & \textrm{for}\ 0<r<1,\\
u(0)=(1-\alpha)^{-(\delta-1)}-1>0,\\
u'(0)=0,\\
u(1)=0,
\end{cases}
\end{equation}
where $p=(\delta+1)/(\delta-1)>1$.
Classical results \cite{JL72,MN18,SZ95} related to (\ref{Ev2}) show that:
\begin{itemize}
\item If $p>p_{\rm S}$, then 
$$
\tlambda\to\tlambda^*:=\frac{2}{p-1}\left(N-2-\frac{2}{p-1}\right)\quad\mbox{ as }\; u(0)\to 1.
$$
\item If $p>p_{\rm S}$, then $(\tlambda^*, r^{2/(p-1)}-1)$ is the unique singular solution of (\ref{Ev2}).
\item If $p_{\rm S}<p<p_{\rm JL}$, then the bifurcation curve $\tlambda(u(0))$ oscillates around $\tlambda^*$ as $u(0)\to\infty$.
\item If $p\ge p_{\rm JL}$, then, the bifurcation curve $\tlambda(u(0))$ is strictly increasing in $u(0)$.
\end{itemize}
Here,
\[
p_{\rm S}:=
\begin{cases}
\infty & \textrm{if}\ N\le 2,\\
\frac{N+2}{N-2} & \textrm{if}\ N>2,
\end{cases}
\qquad
p_{\rm JL}:=
\begin{cases}
\infty & \textrm{if}\ N<11,\\
1+\frac{4}{N-4-2\sqrt{N-1}} & \textrm{if}\ N\ge 11.
\end{cases}
\]
The above bifurcation and the convergence results are discussed in \cite[Section X]{JL72}; the uniqueness of the singular solutions follows from \cite{SZ95}. The uniqueness and the convergence is also obtained in  \cite{MN18}.
Note that $\lambda=\frac{\tlambda}{\delta-1}\to N-1-\delta=\lambda^*$.
Since $p=(\delta+1)/(\delta-1)$, we see that if $(N-2-2\sqrt{N-1})/2<\delta<N/2$ (resp. $1<\delta\le(N-2-2\sqrt{N-1})/2$), then $p_{\rm S}<p<p_{\rm JL}$ (resp. $p\ge p_{\rm JL}$).
The assertions (i) and (ii) follow from the above bifurcation result for (\ref{Ev2}).

\section{Proof of Theorem \ref{thmain2}}

(i) Let $U$ be a regular solution of problem \eqref{E2} and denote by $\varphi_1$ the first eigenfunction of the Dirichlet Laplacian on $B$ such that $\varphi_1>0$ in $B$. Since $U$ is superharmonic and nonconstant we have $U>0$ in $B$. From this one easily obtains that $\frac{1}{1-U}\ge 4U$ in $B$, so $U$ satisfies 
\[
-\Delta U\geq 4\lambda U\quad\mbox{ in }\;B,
\]
and strict inequality holds on a set with positive Lebesgue measure.
We multiply by $\varphi_1$ in the above inequality and integrate over $B$. We find 
\[
\mu_1\int_B\varphi_1Udx=-\int_B\Delta\varphi_1Udx
=-\int_B\varphi_1\Delta Udx>4\lambda\int_B \varphi_1 U dx,
\]
which yields $\mu_1>4\lambda$. Let us now establish the inequality $\lambda<\mu_1/\delta$. Since $0<U<1$ in $B$ we have 
\begin{equation}\label{Uineq}
-\Delta U>\lambda+\delta |\nabla U|^2\quad\mbox{ in }\;B.
\end{equation}
Let $V=e^{\delta U}-1$ which from \eqref{Uineq} and $U=0$ on $\partial B$ satisfies 
\[
-\Delta V\geq \lambda\delta (1+V) \quad\mbox{ in }\;B
\]
and $V=0$ on $\partial B$. We next multiply the above inequality by $\varphi_1$, integrate over $B$ and  proceed as before to deduce $\lambda\delta<\mu_1$.

\medskip

\noindent{\bf Remark.}  The above method works also for problem \eqref{E2v1}. 
The relevant inequalities that one employs are $\frac{1}{(1-U)^p}\geq 4U$ and $\frac{1}{(1-U)^p}>1$ in $B$, which  hold for all $p\geq 1$.
\medskip

(ii) Let $u(r):=(1-U(r))^{-(\delta-1)}-1$, $\tlambda=(\delta-1)\lambda$ and $p:=(\delta+1)/(\delta-1)$.
Then $(\tlambda,u)$ is a classical solution of
\begin{equation}\label{L9E1}
\begin{cases}
\Delta u+\tlambda(u+1)^p=0 & \textrm{in}\ B,\\
u=0 & \textrm{on}\ \partial B,
\end{cases}
\end{equation}
if and only if $(\lambda,U)$ is a regular solution of $(\ref{E2})$.
Since $\delta\ge N/2$, we see that $1<p\le (N+2)/(N-2)$.
By Proposition~\ref{PropJL} we see that the first assertion of (ii) holds.

It follows from \cite[Theorem 12]{GM02} that if $\tlambda\le 2N(p-1)^{p-1}/p^p$, then (\ref{L9E1}) has a classical solution.
Since $p=(\delta+1)/(\delta-1)$, we see that if
\[
0<\lambda\le\frac{1}{\delta-1}2N\frac{(p-1)^{p-1}}{p^p}=N\left(\frac{2}{\delta+1}\right)^{(\delta+1)/(\delta-1)},
\]
then (\ref{E2}) has a solution.
By Proposition~\ref{PropJL} we see that \eqref{barl} holds. 

(iii) Observe that the family (\ref{L9E0}) satisfies (\ref{E2}). The conclusion follows by part (ii) above.

\section{Proof of Theorem \ref{thmain3}}

(i) Let us first assume $\delta =1$ and prove that \eqref{E2} has infinitely many solutions for any $\lambda\in (0,1)$. Let $v(r)=\log 2\lambda-2\log(1-U(r))$.
Then (\ref{E2}) becomes
\begin{equation}\label{S7E4}
\begin{cases}
\Delta v+e^v=0 & \textrm{in}\ B,\\
v=0 & \textrm{on}\ \partial B.
\end{cases}
\end{equation}
By \cite[Theorem 1.1 (ii)]{T06} problem (\ref{S7E4}) has a two-parameter family of singular solutions
\[
v(r)=\log a+(b-2)\log r-2\log\left(1+\frac{a}{2b^2}r^b\right)\ \ \textrm{for $a>0$ and $0<b<2$.}
\]
Since $U(1)=0$, we obtain
\[
\lambda=\frac{2ab^4}{(a+2b^2)^2}\ \ \textrm{and}\ \ 
U(r)=1-\frac{ar^b+2b^2}{a+2b^2}r^{(2-b)/2}\ \ \textrm{for $a>0$ and $0<b<2$.}
\]
We easily see that for each $\lambda\in (0,1)$ the above solutions consist of a one-parameter family of rupture solutions of (\ref{E2}).
Thus, the assertion (i) holds.

Assume now $\delta>1$.
It is easy to see that if $U$ is a solution of \eqref{E2}, then $v(r)=(1-U(r))^{1-\delta}$ satisfies
\begin{equation}\label{L13E1}
\begin{cases}
v''+\frac{N-1}{r}v'+\lambda(\delta -1) v^{p}=0 & \textrm{ for }\ 0<r<1,\\
v(1)=1, & 
\end{cases}
\end{equation}
where $p=(\delta+1)/(\delta -1)>1$. Further, let $t=-\ln r\in [0, \infty)$ and $w(t)=v(r)$. From \eqref{L13E1} we obtain that $w$ satisfies
\begin{equation}\label{L13E2}
\begin{cases}
w_{tt}+\lambda(\delta -1)e^{-2t}w^p=0  & \textrm{ for }\ 0<t<\infty,\\
w(0)=1. & 
\end{cases}
\end{equation}

Let $z(t):=w(t)-1$.
Then (\ref{L13E2}) becomes
\begin{equation}\label{L13E3}
\begin{cases}
z_{tt}+f(t,z)=0 & \textrm{for}\ 0<t<\infty,\\
z(0)=0,
\end{cases}
\end{equation}
where $f(t,z):=\lambda (\delta-1)e^{-2t}|z+1|^p$.
We have
\[
f(t,z)\le 2^{p-1}\lambda(\delta-1)e^{-2t}+2^{p-1}\lambda(\delta-1)e^{-2t}|z|^p=:g(t,z).
\]
Then, $g(t,z)$ is increasing in $z>0$. In order to apply Proposition \ref{pcon} we need to check that condition \eqref{intcon} is fulfilled for small $m>0$. 
Indeed, we have
\[
\int_0^{\infty}g(t,2mt)dt
=2^{p-2}\lambda(\delta-1)+2^{2p-1}\lambda(\delta-1)m^pa,
\]
where $a:=\int_0^{\infty}t^pe^{-2t}ds$.
For each small $\lambda>0$, there is an interval $I\subset(0,\infty)$ such that $\int_0^{\infty}g(t,2mt)dt<m$ for all $m\in I$, since
\[
\frac{2^{p-2}\lambda(\delta-1)}{m}+2^{2p-1}\lambda (\delta-1)am^{p-1}<1
\ \ \textrm{for}\ \ m\in I.
\]
By Proposition \ref{pcon}, problem (\ref{L13E3}) has a solution $z(t)$ such that $z>0$ on $(0, \infty)$ and $z(t)/t\to m$ as $t\to\infty$. Hence, $w=z+1\geq 1$ is a solution of (\ref{L13E2}) and thus, for each small $\lambda>0$, (\ref{L13E1}) has infinitely many solutions.\\

(ii) Let $v(r):=(\delta-1)^{(\delta-1)/2}(1-U(r))^{-(\delta-1)}$.
Then
\begin{equation}\label{L7E0}
\begin{cases}
v''+\frac{N-1}{r}v'+\lambda v^p=0 & \textrm{for}\ 0<r<1,\\
%v(0)=+\infty,\\
v(1)=(\delta-1)^{(\delta-1)/2},\\
%v(r)\ge (\delta-1)^{(\delta-1)/2} & \textrm{for}\ 0\le r\le\sqrt{\lambda},
\end{cases}
\end{equation}
where $p:=(\delta+1)/(\delta-1)$.
Let
\[
x(t):=\frac{v(r)}{\{(\delta-1)(N-1-\delta)\lambda^{-1}\}^{(\delta-1)/2}r^{-\delta+1}}
\quad\textrm{and}\quad t:=-\log r.
\]
Then $x(t)$ satisfies
\begin{equation}\label{L7E1}
\begin{cases}
x'=y,\\
y'=(N-2\delta)y+(\delta-1)(N-1-\delta)(x-x^p)
\end{cases}
\end{equation}
for $t>0$.
Suppose that $(x(t),y(t))$ satisfies (\ref{L7E1}).
Let
\[
E(x,y):=\frac{1}{2}y^2{-}(\delta-1)(N-1-\delta)\left(\frac{x^2}{2}-\frac{x^{p+1}}{p+1}\right).
\]
Since $N/2\le\delta<N-1$, we have
\begin{equation}\label{L7E2}
\frac{d}{dt}E(x(t),y(t))=y\left\{y'-(\delta-1)(N-1-\delta)(x-x^p)\right\}=-(2\delta-N)y^2\le 0
\end{equation}
and hence $E$ is a Lyapunov function.
It is well known that $\Omega:=\{(x,y);\ x>0,\ E(x,y)<0\}$ is a tear-shaped region, and it follows from (\ref{L7E2}) that if $(x(t_0),y(t_0))\in\Omega$, then $(x(t),y(t))\in\Omega$ for all $t\ge t_0$.
If $\delta=N/2$, then the orbit $\{(x(t),y(t))\}$ is periodic, and a simple calculation shows that there is $c_0>0$ such that $c_0\le x(t)<x_{\delta}$, where $x_{\delta}:=\{\delta/(\delta-1)\}^{(\delta-1)/2}$.
Then $v(r)\to\infty$ as $r\to 0$.
The corresponding solution $v(r)$ is a singular solution, and hence the corresponding solution $U(r)$ is a rupture solution.
When $N/2<\delta<N-1$, it follows from (\ref{L7E2}) that there is no periodic orbit.
Since $\Omega$ is bounded, by Poincar\'{e}-Bendixon theorem the orbit $\{(x(t),y(t))\}$ converges to an equilibrium point in $\Omega$, which is $(1,0)$.
Therefore, $x(t)\to 1$ as $t\to\infty$.
The corresponding solution $v(r)$ is a singular solution, and the corresponding solution $U(r)$ is a rupture solution.

It is enough to show that for each $\lambda\in(0,\lambda_0)$, there are infinitely many initial data $(x(0),y(0))\in\Omega$ such that
\begin{equation}\label{S7E3}
v(1)=\{(\delta-1)(N-1-\delta)\lambda^{-1}\}^{(\delta-1)/2}x(0)=(\delta-1)^{(\delta-1)/2}.
\end{equation}
Here, the boundary condition in (\ref{L7E0}) is satisfied if (\ref{S7E3}) holds.
Let
$$
y_{\delta,x}:=\sqrt{2(\delta-1)(N-1-\delta)\left(\frac{x^2}{2}-\frac{x^{p+1}}{p+1}\right)}.
$$
Now, let $\lambda\in (0,\lambda^{***})$ be fixed, where $\lambda^{***}>0$ is given in \eqref{l3} and let $x(0)=\left(\frac{\lambda}{N-1-\delta}\right)^{(\delta-1)/2}$.
Then, $0<x(0)<x_{\delta}$ and (\ref{S7E3}) holds.
Let $|y(0)|<y_{\delta,x(0)}$.
Then $(x(0),y(0))\in\Omega$.
Hence, the orbit $\{(x(t),y(t))\}$ with initial data $(x(0),y(0))$ is in $\Omega$ for $t>0$.
Moreover, the orbit is uniformly away from $(0,0)$, since $E(x(t),y(t))\le E(x(0),y(0))$ for $t>0$.
Hence the corresponding solution $U$ of \eqref{E2} is a rupture solution.

The asymptotic expansion (\ref{L7E-2}) follows from the fact that $x(t)\to 1$ ($t\to\infty$), and (\ref{L7E-1}) follows from the fact that $c_0\le x(t)< x_{\delta}$. 

(iii)  Letting $v=(1-U)^{1-\delta}$, we see that $v$ satisfies \eqref{L13E1} 
where $p=(\delta+1)/(\delta -1)$ fulfills $1<p\leq N/(N-2)$.
The rest of the proof follows from the result below.

\begin{lemma}\label{L10}
Let $N\ge 3$, $1<p\leq N/(N-2)$ and $a> 0$. Then, there exists $\Lambda_0>0$ such that for all $\lambda\in (0, \Lambda_0)$ the problem 
\begin{equation}\label{L10E1}
\begin{cases}
v''+\frac{N-1}{r}v'+\lambda v^{p}=0 & \textrm{ for }\ 0<r<1,\\
v(1)=a, & 
\end{cases}
\end{equation}
has infinitely many singular solutions.  
\end{lemma}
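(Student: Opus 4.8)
The plan is to prove Lemma~\ref{L10} by the same dynamical-systems reduction used in part (ii) of the theorem, turning the radial ODE into an autonomous planar system via the Emden--Fowler change of variables $t=-\log r$ together with a self-similar rescaling that kills the explicit $r$-dependence. Concretely, setting $v(r)=\{(N-2)(N-(N-2)p)\lambda^{-1}\}^{1/(p-1)}r^{-2/(p-1)}x(t)$ (the exponent $2/(p-1)$ being chosen so the singular profile $r^{-2/(p-1)}$ is exactly scaled out, which is legitimate since $1<p\le N/(N-2)$ guarantees $N-(N-2)p\ge 0$ and the positivity of the constant), I expect \eqref{L10E1} to become a first-order system
\begin{equation}\label{planL10}
\begin{cases}
x'=y,\\
y'=\beta y+\gamma(x-x^p),
\end{cases}
\end{equation}
for appropriate constants $\beta,\gamma$ depending on $N$ and $p$, analogous to \eqref{L7E1}. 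The key structural point is that for the subcritical range $1<p\le N/(N-2)$ the damping coefficient $\beta$ has the favorable sign (or vanishes at the borderline), so that the same energy functional $E(x,y)=\tfrac12 y^2-\gamma\bigl(\tfrac{x^2}{2}-\tfrac{x^{p+1}}{p+1}\bigr)$ is a Lyapunov function that is nonincreasing along orbits.

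First I would carry out the change of variables explicitly and verify the signs of the coefficients, then introduce the tear-shaped sublevel region $\Omega=\{(x,y):x>0,\ E(x,y)<0\}$ and observe, exactly as in \eqref{L7E2}, that $\Omega$ is forward-invariant and that any orbit starting in $\Omega$ stays uniformly bounded away from the origin $(0,0)$. This uniform lower bound on $x(t)$ is what forces $v(r)\to\infty$ as $r\to0$, i.e. guarantees that the corresponding solution of \eqref{L10E1} is genuinely singular rather than regular. The next step is the boundary-condition count: I would fix the single initial height $x(0)$ by the requirement that $v(1)=a$ be satisfied, which pins down $x(0)$ to one admissible value in $(0,x_\delta)$, and then let $y(0)$ vary freely over an interval $|y(0)|<y_{*,x(0)}$ with $(x(0),y(0))\in\Omega$. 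Because this is a one-parameter family of admissible initial slopes, each producing a distinct singular solution, I obtain infinitely many singular solutions, provided $\lambda$ is small enough that $x(0)$ lands strictly inside the interval $(0,x_\delta)$ where the interior of $\Omega$ meets the vertical line $x=x(0)$; this smallness is precisely the content of the threshold $\Lambda_0$.

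The main obstacle I anticipate is the borderline case $p=N/(N-2)$, where the damping coefficient $\beta$ degenerates (one checks that $N-2\delta$ becomes $N-2(N-1)=\,$ the analogue of the $\delta=N/2$ situation in part (ii)). There the Lyapunov function is conserved rather than strictly decreasing, so orbits in $\Omega$ are periodic and one must argue, as in the $\delta=N/2$ subcase, that periodicity still yields $c_0\le x(t)<x_\delta$ and hence singularity; the subtlety is ensuring the orbit does not degenerate to the equilibrium. For the strictly subcritical range $1<p<N/(N-2)$ the favorable sign of $\beta$ makes $E$ strictly decreasing, so by the Poincar\'e--Bendixson theorem every orbit in the bounded invariant region $\Omega$ converges to the unique interior equilibrium $(1,0)$, giving the cleaner conclusion $x(t)\to1$. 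Beyond this dichotomy the argument is essentially a transcription of the proof of part (ii), so I would state it compactly, reusing the invariance computation \eqref{L7E2} and flagging only the sign verification and the determination of $\Lambda_0$ as the genuinely new bookkeeping.
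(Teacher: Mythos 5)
The reduction you propose does not exist in the range of the lemma, and this is not a matter of bookkeeping but the reason the paper's proof of Lemma~\ref{L10} looks nothing like part (ii). For the ansatz $v(r)=Lr^{-2/(p-1)}x(-\log r)$ to yield an autonomous system with an interior equilibrium at $x=1$ (equivalently, for $Lr^{-2/(p-1)}$ to be an exact singular solution), one needs $\lambda L^{p-1}=\tfrac{2}{p-1}\bigl(N-2-\tfrac{2}{p-1}\bigr)=\tfrac{2\left((N-2)p-N\right)}{(p-1)^2}$, which is positive precisely when $p>N/(N-2)$ --- the complementary range to the lemma. Your constant $(N-2)\bigl(N-(N-2)p\bigr)$ is indeed nonnegative for $p\le N/(N-2)$, but it is the \emph{negative} of the quantity that must equal $\lambda L^{p-1}$, so no such profile exists. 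With any admissible positive normalization the rescaled equation reads $x''+(2m+2-N)x'+m\bigl(m-(N-2)\bigr)x+\kappa x^p=0$ with $m=2/(p-1)\ge N-2$ and $\kappa>0$: both coefficients of $x$ and $x^p$ are nonnegative, so the only equilibrium with $x\ge0$ is the origin; there is no $(1,0)$, no tear-shaped region $\Omega$, and no analogue of \eqref{L7E1}--\eqref{L7E2} to run. At the endpoint $p=N/(N-2)$ the scaling constant vanishes identically and the true singular behaviour carries a logarithmic correction, $v(r)\sim\bigl(\tfrac{N-2}{\sqrt{2}}\bigr)^{N-2}r^{2-N}\bigl(\log\tfrac1r\bigr)^{-(N-2)/2}$ by Aviles \cite{A87}, which no pure power rescaling captures. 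In short, singular solutions in the lemma's range are asymptotic to the fundamental solution $r^{2-N}$ (up to the log factor at the endpoint), not to $r^{-2/(p-1)}$, so the phase-plane machinery of part (ii) cannot be transcribed.

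The paper's proof is accordingly different in both subcases. For $p=N/(N-2)$ it takes the singular solutions of the normalized problem with $v(1)=0$, $v'(1)=-\alpha$ constructed by Ni--Sacks \cite{NS85}, uses the Aviles asymptotics \eqref{avileslim} to see that $\max_{0\le\rho\le1}\rho^{N-2}v_\alpha(\rho)$ is finite and attained inside $(0,1)$, and then rescales, $V_\beta(r)=\rho_\beta^{N-2}\lambda^{-(N-2)/2}v_\beta(\rho_\beta r)$, to hit the boundary value $a$, obtaining a one-parameter family indexed by $\beta$ in a small interval. For $1<p<N/(N-2)$ it substitutes $t=r^{2-N}$, so that $r\to0$ becomes $t\to\infty$, and applies the non-autonomous global existence result of Proposition~\ref{pcon} to produce infinitely many solutions with $w(t)/t\to m$, i.e.\ $v(r)\sim m\,r^{2-N}$. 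If you want to retain a dynamical-systems flavour you would have to work with the non-autonomous system coming from $v=r^{2-N}x$, which is essentially what Proposition~\ref{pcon} packages; the autonomous Emden--Fowler picture is simply unavailable here, and your argument as written has a fatal gap.
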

\begin{proof}  We shall analyse separately the cases $p=N/(N-2)$ and $1<p<N/(N-2)$ for which we provide different arguments.
\smallskip

\noindent{\it Case 1:} $p=N/(N-2)$. By the proof of Theorem~1(i) in \cite{NS85}, there exists $\alpha^*>0$ such that 
for any $\alpha\in (0, \alpha^*)$ the problem
\begin{equation}\label{L10Ea}
\begin{cases}
v''+\frac{N-1}{r}v'+v^{N/(N-2)}=0 & \textrm{ for }\ 0<r<1,\\
v(1)=0\,,v'(1)=-\alpha, & 
\end{cases}
\end{equation}
has a singular solution $v_\alpha$. (More precisely,
$\alpha^*=(N-2)\alpha_0$, where $\alpha_0$ appears in the proof in \cite[Theorem 1(i), page 664]{NS85}.) By the result in \cite[Theorem A]{A87} we have 
\begin{equation}\label{avileslim}
\lim_{r\to 0}\frac{v_\alpha(r)}{r^{2-N}\Big(\log\frac{1}{r}\Big)^{-\frac{N-2}{2}}}=\Big(\frac{N-2}{\sqrt{2}}\Big)^{N-2}.
\end{equation}
Fix $\alpha\in (0,\alpha^*)$ and let $a>0$. Take $\lambda>0$ small such that 
\[
\max_{0\leq \rho\leq 1}\rho^{N-2}v_\alpha(\rho)>\lambda^{\frac{N-2}{2}}a.
\]
Note that by the asymptotic behavior of $v_\alpha$ given by \eqref{avileslim}, the above maximum is finite and is achieved inside of the interval $(0,1)$. 
By the continuous dependence of solutions $v_\alpha$ on $\alpha$, one can find a small interval $I\subset (0, \alpha^*)$ centred at $\alpha$ such that 
\[
\max_{0\leq \rho\leq 1}\rho^{N-2}v_\beta(\rho)>\lambda^{\frac{N-2}{2}}a,
\]
for any $\beta\in I$. Thus, for any $\beta\in I$ there exists $\rho_\beta\in (0,1)$ such that 
\[
\rho_\beta^{N-2}v_{\beta}(\rho_\beta)=\lambda^{\frac{N-2}{2}}a.
\]
Then, the family $\{V_\beta\}_{\beta\in I}$ given by
\[
V_\beta(r)=\rho^{N-2}_\beta \lambda^{-\frac{N-2}{2}}v_\beta(\rho_\beta r)\,\;\; \mbox{ for } 0<r\leq 1,
\]
consists of singular solutions of \eqref{L10E1}. Let us note that if 
$\beta_1, \beta_2\in I$, $\beta_1\neq \beta_2$ then $V_{\beta_1}\neq V_{\beta_2}$. Indeed, if this was not true, by the local uniqueness of solutions to regular ODE,  $V_{\beta_1}(r)=V_{\beta_2}(r)$ for all $0<r\leq \min\{1/\rho_{\beta_1}, 1/\rho_{\beta_2}\}$ which contradicts the definition of $v_{\beta_1}$ and $v_{\beta_2}$ as solutions of \eqref{L10Ea} with $\alpha=\beta_1$ and $\alpha=\beta_2$ respectively. Thus, we provided an infinite family of singular solutions to \eqref{L10E1} which proves Theorem~\ref{thmain3}(iii) in the case $p=N/(N-2)$.

\smallskip

\noindent{\it Case 2:} $1<p<N/(N-2)$.
It is proved in \cite{GV88} that if $a=0$ then \eqref{L10E1} has infinitely many solutions for any $\lambda>0$.  For $a>0$ we provide a different approach based on Proposition \ref{pcon}. Let $a>0$ and $w(t)=v(r)$, $t=r^{2-N}$. Then $w$ satisfies
\begin{equation}\label{L7E03a}
\begin{cases}
w_{tt}+\frac{\lambda}{(N-2)^2}t^{-2(N-1)/(N-2)} w^p=0 & \textrm{for}\ t>1,\\
w(1)=a.
\end{cases}
\end{equation}
Consider the problem
\begin{equation}\label{L7E03b}
\begin{cases}
z_{tt}+\frac{\lambda}{(N-2)^2}(t+1)^{-2(N-1)/(N-2)} (z+a)^p=0 & \textrm{for}\ t>0,\\
z(0)=0.
\end{cases}
\end{equation}
It is not hard to see that for any $1<p<N/(N-2)$, the function $g(t,z)=\frac{\lambda}{(N-2)^2}(t+1)^{-2(N-1)/(N-2)} (z+a)^p$ satisfies condition \eqref{intcon} provided $\lambda>0$ is small enough. By Proposition \ref{pcon} it follows that \eqref{L7E03b} has infinitely many positive solutions
with $z(t)\to \infty$ as $t\to \infty$. Letting now $w(t)=z(t-1)+a$, it follows that \eqref{L7E03a} has infinitely many solutions with $w(t)\to \infty$ as $t\to \infty$ provided $\lambda>0$ is small. This implies in turn that for any small $\lambda>0$, problem \eqref{L10E1} has infinitely many singular solutions and the proof of  Lemma~\ref{L10} is now complete.
\end{proof}


\begin{thebibliography}{99}
\bibitem{A87} {P.~Aviles},
{\it Local behavior of solutions of some elliptic equations},
Comm. Math. Phys. {\bf 108} (1987), 177--192.

\bibitem{C05} {A.~Constantin}, 
{\it On the existence of positive solutions of second order differential equations}, 
Annali di Matematica  {\bf 184} (2005), 131--138.

\bibitem{DW12} {J.~Davila and J.~Wei},
{\it Point ruptures for a MEMS equation with fringing field}, 
Comm. Part. Differential Equations {\bf 37} (2012), 1462--1493.

\bibitem{GM02} {F.~Gazzola and A.~Malchiodi}, 
{\it Some remarks on the equation $-\Delta u=\lambda(1+u)^p$ for varying $\lambda$, $p$ and varying domains}, 
Comm. Part. Differential Equations  {\bf 27} (2002), 809--845.

\bibitem{GM20}{M.~Ghergu and Y.~Miyamoto},
{\it Radial single point rupture solutions for a general MEMS model},
submitted, https://arxiv.org/pdf/2002.12711.pdf

\bibitem{GNN79} {B.~Gidas, W.-M.~Ni and L.~Nirenberg},
{\it Symmetry and related properties via the maximum principle},
Commun. Math. Phys. {\bf 68} (1979), 209--243.

\bibitem{GV88} {M.~Guedda and L.~V\'eron}, 
{\it Local and global properties of solutions of quasilinear elliptic equations}, 
J. Differential Equations  {\bf 76} (1988), 159--189.

\bibitem{JL72}{D.~Joseph and T.~Lundgren},
{\it Quasilinear Dirichlet problems driven by positive sources},
Arch. Ration. Mech. Anal. {\bf 49} (1972/1973), 241--269.

\bibitem{KS01}{P.~Korman and J.~Shi},
{\it New exact multiplicity results with an application to a population model}, Proc. Roy. Soc. Edinburgh Sect. A {\bf 131} (2001), 1167--1182.

\bibitem{LW08}{A.E.~Lindsay and M.~Ward},
{\it Asymptotics of some nonlinear eigenvalue problems modeling a MEMS capacitor: Part I: Fold point asymptotics},
Methods Appl. Anal. {\bf 15} (2008), 297--325.

\bibitem{M18}{L.~Mei},
{\it Structure of positive radial solutions of a quasilinear elliptic problem with singular nonlinearity},
Complex Var. Elliptic Equ. {\bf 63} (2018), 1595--1603. 

\bibitem{MP88}{F.~Mignot and J.-P.~Puel}, 
{\it Solution radiale singuli\`{e}re de $-\Delta u=\lambda e^u$},
C. R. Acad. Sci. Paris S\'{e}r. I Math. {\bf 307} (1988), 379--382.

\bibitem{MN18}{Y. Miyamoto and Y. Naito},
{\it Singular extremal solutions for supercritical elliptic equations in a ball},
J. Differential Equations {\bf 265} (2018), 2842--2885.

\bibitem{MN20}{Y. Miyamoto and Y. Naito},
{\it Fundamental properties and asymptotic shapes of the singular and classical radial solutions for supercritical semilinear elliptic equations},
submitted.

\bibitem{SZ95}{J.~Serrin and H.~Zou},
{\it Classification of positive solutions of quasilinear elliptic equations},
Topol. Methods Nonlinear Anal. {\bf 3} (1994), 1--25.

\bibitem{NS85}{W.-M. Ni and P.~Sacks},
{\it Singular behavior in nonlinear parabolic equations},
Trans. Amer. Math.  Soc.  {\bf 287} (1985), 657--671.

\bibitem{T06}{J.~Tello},
{\it Stability of steady states of the Cauchy problem for the exponential reaction-diffusion equation},
J. Math. Anal. Appl. {\bf 324} (2006), 381--396.

\bibitem{WY10} {J.~Wei and D.~Ye},  {\it On MEMS equation with fringing field}, Proc. Amer. Math.  Soc.  {\bf 138} (2010), 1693--1699.

\end{thebibliography}
\end{document}